\newcommand{\heading}[1]{\medskip\par\noindent{\bf #1}}
\def\hom{\hbox{\textsc{Hom}}}
\def\cover{\hbox{\textsc{Cover}}}
\def\planarcover{\hbox{\textsc{PlanarCover}}}
\def\segcol{\hbox{\textsc{SegmentColoring}}}
\def\mpsat{\hbox{\textsc{2-in-4-MonotonePlanarSAT}}}
\def\cP{\hbox{\rm \sffamily P}}
\def\cNP{\hbox{\rm \sffamily NP}}
\def\computationproblem#1#2#3{% {problem_name}{input}{output}
	\begin{center}
	\begin{tabular}{rp{10cm}}
	%\hline
	{\bf Problem:\enspace}&#1\\
	{\bf Input:\enspace}&#2\\
	{\bf Output:\enspace}&#3\\
	%\hline
	\end{tabular}
	\end{center}
}
\newenvironment{packed_enum}{
	\begin{enumerate}
		\setlength{\itemsep}{1pt}
	    \setlength{\parskip}{0pt}
		\setlength{\parsep}{0pt}
}{\end{enumerate}}
\newenvironment{packed_item}{
	\begin{itemize}
		\setlength{\itemsep}{1pt}
	    \setlength{\parskip}{0pt}
		\setlength{\parsep}{0pt}
}{\end{itemize}}
\begin{document}

\mainmatter

\title{On the Complexity of Planar Covering of Small Graphs\thanks{The initial research was
supported by DIMACS/DIMATIA REU program (grant number 0648985). The third and the fourth author
were supported by Charles University as GAUK 95710.  The fourth author is also affiliated
with Institute for Theoretical Computer Science (supported by project 1M0545 of The Ministry
of Education of the Czech Republic).
}}
\titlerunning{On the Complexity of Planar Covering of Small Graphs}
\newcounter{lth}
\setcounter{lth}{2}
\author{Ond\v{r}ej B\'ilka\thanks{
Department of Applied Mathematics, Faculty of Mathematics and Physics, Charles
University, Malostransk\'e n\'am.~25, 118~00 Prague, Czech Republic.
E-mails: {\tt ondra@kam.mff.cuni.cz, jirasekjozef@gmail.com, klavik@kam.mff.cuni.cz,
	tancer@kam.mff.cuni.cz, janv@kam.mff.cuni.cz}
}
 \and Jozef Jir\'asek$^{\fnsymbol{lth}}$ \and Pavel Klav\'ik$^{\fnsymbol{lth}}$ \and 
 Martin Tancer$^{\fnsymbol{lth}}$ \and Jan Volec$^{\fnsymbol{lth}}$
}
\authorrunning{O.~B\'ilka, J.~Jir\'asek, P.~Klav\'\i k, M.~Tancer, J.~Volec}
\institute{}
%\date{}
\maketitle

\begin{abstract}
The problem $\cover(H)$ asks whether an input graph $G$ covers a fixed graph
$H$ (i.e., whether there exists a homomorphism $G \rightarrow H$ which locally
preserves the structure of the graphs). Complexity of this problem has been
intensively studied. In this
paper, we consider the problem $\planarcover(H)$ which restricts the input
graph $G$ to be planar.

$\planarcover(H)$ is polynomially solvable if $\cover(H)$ belongs to \cP, and
it is even trivially solvable if $H$ has no planar cover. Thus the interesting
cases are when $H$ admits a planar cover, but $\cover(H)$ is \cNP-complete.
This also relates the problem to the long-standing Negami Conjecture which aims
to describe all graphs having a
planar cover. Kratochv\'il asked whether there are non-trivial graphs for which
$\cover(H)$ is \cNP-complete but $\planarcover(H)$ belongs to \cP.

We examine the first nontrivial cases of graphs $H$ for which $\cover(H)$ is
\cNP-complete and which admit a planar cover. We prove
\cNP-completeness of $\planarcover(H)$ in these cases.

\end{abstract}

\section{Introduction}

Unless stated otherwise, we work with simple undirected finite graphs and we use standard notation from graph theory.

\heading{Graph Homomorphisms and Covers.} Let $G$ and $H$ be graphs. A mapping $f : V(G) \to V(H)$
is a \emph{homomorphism} from $G$ to $H$ if the edges of $G$ are mapped to the edges $H$, i.e., for
every edge $uv\in E(G)$, $f(u)f(v) \in E(H)$.

A homomorphism $f$ is called \emph{locally bijective} if for every $v \in V(G)$
the \emph{closed neighborhood} $N_G[v] \subseteq V(G)$ is bijectively mapped to
$N_H[f(v)] \subseteq V(G)$. Notice that $x,y \in N_G[v]$ and $f(x)f(y) \in E(H)$ may or may not
imply $xy \in E(G)$. We say that $G$ \emph{covers} $H$ (or $G$ is a \emph{cover} of $H$) if there
exists a locally bijective homomorphism from $G$ to $H$; see Figure~\ref{example_of_covering}. If
$G$ covers $H$, their local structures are somewhat similar. Note that if $G$ covers $H$ and $H$ is
connected, then $|V(G)|$ is a multiple of $|V(H)|$ and every vertex of $H$ has the same number of
preimages.

\begin{figure}[t]
\centering
\includegraphics{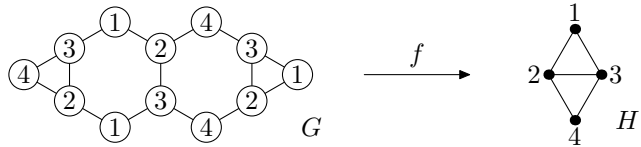}
\caption{An example of a cover of $K_4^-$. For every vertex $v$, its image $f(v)$ is written in the
	circle. Notice that for vertices mapped to $1$, its neighbors $2$ and $3$ may or may not be
	adjacent.}
\label{example_of_covering}
\end{figure}

Graph homomorphisms and covers provide a common language for various problems in graph theory. They
are studied as generalized coloring. A graph is properly $k$-colorable if and only if it
homomorphically maps to $K_k$. Similarly, covering is a generalization of coloring which puts
additional restrictions on neighborhoods. From another point of view, $G$ covers $K_k$ if and only if $G$
can be partitioned into $k$ 1-perfect codes. 

\heading{Computational Problems.} For a fixed graph $H$, the problem $\hom(H)$ asks whether there
exists a homomorphism from an input graph $G$ to $H$. Hell and
Ne\v{s}et\v{r}il~\cite{hell_nesetril90} proved a dichotomy for computational complexity: $\hom(H)$
is polynomially solvable if $H$ is a bipartite graph, and it is \cNP-complete otherwise.

Similarly, the problem $\cover(H)$ asks whether an input graph $G$ covers a fixed graph $H$. Study
of this problem was pioneered by Bodlaender~\cite{bodlaender}. First results depending on the graph
$H$ were proved by Abello et al.~\cite{abello}.  Kratochv\'{\i}l~\cite{kratochvil94} showed that
$\cover(K_4)$ is \cNP-complete. Afterwards, Kratochv\'{\i}l, Proskurowski and
Telle~\cite{kratochvil97} and Fiala~\cite{fiala00} proved that $\cover(H)$ is \cNP-complete for
every $k$-regular graph $H$ with $k \ge 3$.  Later, a dichotomy for all simple graphs with up to six
vertices was proved,~\cite{kratochvil98}.

We can restrict the input graph $G$ and ask whether it changes $\cover(H)$ to be polynomially
solvable. In this paper, we restrict $G$ to be planar.  We consider the following problem:

\computationproblem
{$\planarcover(H)$.}
{A planar graph $G$.}
{Yes if $G$ covers $H$, no otherwise.}

Every $\planarcover(H)$ problem trivially lies in \cNP. In the rest of the paper, we only question \cNP-hardness. Note that if $\cover(H)$ is
polynomially solvable, then $\planarcover(H)$ is also polynomially solvable.

Many \cNP-complete problems remain hard for planar inputs. Originally, the graph covering problems
looked similar to problems such as Not-All-Equal Satisfiability or 3-Edge-Colorability of Cubic
graphs. Both of them are polynomially solvable for planar graphs, the first was proved by
Moret~\cite{planar_naesat}, the latter is trivial to decide since Tait has showed that the
3-Edge-Colorability of Cubic planar graphs is equivalent to the Four Color Theorem. Indeed the
NP-hardness reduction for $\cover(K_4)$ presented in~\cite{kratochvil94} is from 3-Edge-Colorability
of Cubic graphs. This has led Kratochv\'\i l to pose the problem of the complexity of
$\planarcover(K_4)$ at several occasions, including 6th Czech-Slovak Symposium on Graph Theory 2006,
IWOCA 2007 and ATCAGC 2009.  In this paper, we prove that $\planarcover(K_4)$ is \cNP-complete.

\heading{Negami Conjecture.} As a motivation to study the $\planarcover$ problems, we describe a relation to a long-standing conjecture of
Negami~\cite{negami88}. For a graph $H$, we can ask whether there exists a planar graph $G$ that covers $H$. If the answer is no, then the
problem $\planarcover(H)$ is trivial---we output ``no'' regardless of the input. Negami conjectured the following:

\begin{conjecture}
The connected graphs which admit a planar cover are exactly the connected graphs embeddable in the projective plane.
\end{conjecture}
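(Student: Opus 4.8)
The plan is to prove the two directions of the set equality separately, exploiting that both classes of graphs are minor-closed. For the inclusion ``projective-planar $\Rightarrow$ admits a planar cover,'' I would use the orientation double cover $p : S^2 \to \mathbb{RP}^2$ of the projective plane, whose total space is the sphere. Given a connected graph $H$ embedded in $\mathbb{RP}^2$, its preimage $\tilde H = p^{-1}(H)$ is a graph embedded in $S^2$, hence planar, and the restriction $p|_{\tilde H} : \tilde H \to H$ is a two-fold topological covering that is simultaneously a graph covering, i.e., a locally bijective homomorphism. This direction is classical (it is essentially Negami's original observation) and I expect it to be routine.

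The harder inclusion is the converse. The key structural input is that the property ``admits a finite planar cover'' is closed under taking minors: if $\tilde H$ is a planar cover of $H$, then deleting the whole fiber over a deleted edge, or contracting all edges in the fiber over a contracted edge, produces a planar cover of the corresponding minor of $H$. Consequently the class of graphs admitting a planar cover has, by the Robertson--Seymour theorem, a finite set of minor-minimal forbidden minors. Since by the first part every projective-planar graph admits a planar cover, each such obstruction is necessarily non-projective-planar. Projective-planarity is itself minor-closed and its obstruction set is the explicit list of $35$ graphs determined by Glover, Huneke and Wang and proved complete by Archdeacon. Therefore, to force the two minor-closed classes to coincide, it suffices to prove the single assertion that \emph{each of these $35$ minor-minimal non-projective-planar graphs fails to admit a planar cover}: if so, any non-projective graph, containing one of the $35$ as a minor, inherits the absence of a planar cover by minor-closedness, which is exactly the contrapositive of the desired implication.

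I would then dispose of the obstructions in bulk using connectivity and counting reductions. A disconnected obstruction, or one separated by a small cutset, forces its pieces (after appropriate completions) to admit planar covers, so its non-coverability reduces to that of smaller, better-understood graphs such as $K_{4,4}$ minus an edge; degree, Euler-formula, and girth arguments eliminate the remaining highly structured members. Carrying this through leaves a single stubborn graph, the complete four-partite graph $K_{1,2,2,2}$, as the lone obstruction whose non-coverability is not settled by these uniform arguments.

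The main obstacle is therefore precisely to show that $K_{1,2,2,2}$ admits no finite planar cover. The natural attack is to first bound the fold number of a hypothetical minimal planar cover---reducing to covers of bounded size---and then to rule these out by a discharging argument on the planar cover combined with the non-regular degree sequence $(6,5,5,5,5,5,5)$ that every cover of $K_{1,2,2,2}$ must reproduce. I expect this finite but delicate case analysis to be the crux of the whole proof; indeed, settling $K_{1,2,2,2}$ is known to be equivalent to establishing the full conjecture, and it is exactly the point at which a complete proof currently stands or falls.
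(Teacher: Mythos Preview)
The statement you are trying to prove is presented in the paper as a \emph{conjecture}, not a theorem: the paper explicitly says that one direction is straightforward while ``the other implication is still open,'' and it cites Hlin\v{e}n\'y and Thomas for the state of the art. There is therefore no proof in the paper to compare your proposal against.

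Your outline is an accurate survey of what is known: the orientation double cover gives the easy direction, minor-closedness together with the Archdeacon list of $35$ obstructions reduces the hard direction to a finite check, and all of these have been eliminated except $K_{1,2,2,2}$. But precisely because of that last sentence, what you have written is not a proof. The step where you propose to ``first bound the fold number of a hypothetical minimal planar cover'' and then finish by discharging is the entire open problem; no such a~priori bound on the fold number is known, and without it the case analysis is not finite at all. You yourself concede in the final sentence that this is ``exactly the point at which a complete proof currently stands or falls,'' which is another way of saying the proposal has a genuine gap that nobody currently knows how to close.
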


If a connected graph is embeddable in the projective plane, it is straightforward to construct one
of its planar covers. The other implication is still open. The most recent results can be found in
Hlin\v{e}n\'y and Thomas~\cite{hlineny04}. For example, $K_7$ has no (finite) planar cover since
there is no six regular planar graph.  Therefore, the problem $\planarcover(K_7)$ is polynomially
solvable but $\cover(K_7)$ is \cNP-complete; see~\cite{kratochvil97}.

It is natural to ask whether the restriction to planarity makes the problem easier in a non-trivial
case. Kratochv\'{\i}l asked the following question in his talk at Prague Midsummer Combinatorial
Workshop 2009:

\begin{question} \label{kratochvil_question}
Is it true that $\planarcover(H)$ is \cNP-complete if and only if $\cover(H)$ is \cNP-complete and the graph $H$ admits a planar cover?
\end{question}

\heading{Our Results.} In this paper, we show that $\planarcover(H)$ is \cNP-complete for several small graphs $H$.

\begin{theorem} \label{K6_is_NPc_theorem}
The problem $\planarcover(K_6)$ is \cNP-complete.
\end{theorem}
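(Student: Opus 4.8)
Membership of $\planarcover(K_6)$ in $\cNP$ is clear, as a candidate locally bijective homomorphism $G\to K_6$ can be guessed and verified in polynomial time; the content of the statement is therefore the $\cNP$-hardness, and the plan is to reduce from $\mpsat$. The reduction rests on the following reformulation of the target condition: since $K_6$ is connected and $5$-regular, a graph $G$ covers $K_6$ if and only if $G$ is $5$-regular and has a proper $6$-colouring in which the five neighbours of every vertex receive the five colours distinct from its own (equivalently, $V(G)$ partitions into six $1$-perfect codes). Thus the reduction only needs to produce connected $5$-regular planar graphs. The guiding picture is that the icosahedron is a planar cover of $K_6$ --- it is the planar double cover arising from the triangulation of the projective plane by $K_6$ --- and the gadgets below are assembled from icosahedron-like pieces whose ``legal'' colourings mimic, up to relabelling the six colours, local portions of that cover.

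Given a monotone planar formula $\phi$ whose clauses have size four, I will construct a planar graph $G_\phi$ from three kinds of gadgets, wired together along a fixed planar embedding of the incidence graph of $\phi$. A \emph{wire} gadget is a two-port planar fragment whose colourings that extend to a cover carry a single Boolean value, forced equal at the two ports; since wires can be bent and lengthened freely, they also route values across the plane, so no crossing gadget is needed. A \emph{variable} gadget has one port for each occurrence of the variable and forces all of these ports to carry the same value; it can be assembled from ``copy'' gadgets that duplicate a value. A \emph{clause} gadget has four ports, and its colourings that extend to a cover are exactly those in which precisely two of the four attached wires are in their ``true'' state. Each gadget is a planar fragment that completes to a $5$-regular planar graph with its ports lying on the outer face in the prescribed cyclic order, so the assembled $G_\phi$ is planar, $5$-regular, and of size linear in $|\phi|$.

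Correctness has two halves. The easy half: a $2$-in-$4$ satisfying assignment of $\phi$ is converted into a proper $6$-colouring of $G_\phi$ realising each gadget in an intended legal state, hence into a cover $G_\phi\to K_6$. The hard half is a \emph{rigidity} statement: in \emph{every} colouring witnessing $G_\phi\to K_6$, the restriction to each gadget must be one of the admissible legal states, so that the colouring necessarily encodes a consistent truth assignment that is $2$-in-$4$ on each clause. Here the wire and variable gadgets are comparatively tame --- the idea is that a sufficiently long belt of icosahedron-like structure transmits its state without drift, and the covering constraint leaves no slack to desynchronise --- whereas the clause gadget is the main obstacle: one needs a planar $5$-regular fragment with four ports whose cover-extendable colourings, read through the wires, form exactly the $2$-in-$4$ relation, with no unintended configuration ``leaking through'' while still making every vertex see all six colours. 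Designing this clause gadget and carrying out the case analysis that verifies its state space is where the real work lies; once this is done, gluing the gadgets along the planar incidence structure of $\phi$ yields $G_\phi$ which covers $K_6$ if and only if $\phi\in\mpsat$, completing the reduction.
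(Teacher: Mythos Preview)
Your proposal is not a proof but a plan: you describe what the wire, variable, and clause gadgets \emph{should} do, but you do not construct any of them. You yourself concede that ``designing this clause gadget and carrying out the case analysis that verifies its state space is where the real work lies; once this is done\ldots'' --- but that is precisely the content of the theorem. A reduction schema with unspecified gadgets is not a reduction. In particular, nothing in the proposal explains how a single Boolean bit is to be encoded in a way that is stable under the full $S_6$ symmetry of $K_6$, nor why a $5$-regular planar fragment realising exactly the $2$-in-$4$ relation on four ports should exist. The rigidity claims for wires (``a sufficiently long belt of icosahedron-like structure transmits its state without drift'') are plausible-sounding but unsupported.

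The paper takes a different and in some ways cleaner route: it reduces from $6$-\segcol\ (proper $6$-colouring of a segment intersection graph, given by its arrangement) rather than from a SAT variant. The planar arrangement of segments already supplies the planar topology of $G'$; each crossing is replaced by a single \emph{crossing gadget}, built from eight copies of a small \emph{auxiliary gadget} whose covering of $K_6$ is unique up to colour permutation. The crossing gadget simultaneously (i) forces the two subsegments of the same segment meeting there to carry the same colour and (ii) forces the two crossing segments to carry distinct colours. Because colours of $K_6$ are used directly as the six values, there is no Boolean encoding to protect against the $S_6$ symmetry, no separate variable or clause gadgets, and no $2$-in-$4$ constraint to engineer. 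If you want to pursue your SAT-based approach, the burden is to actually exhibit the gadgets and verify their state spaces; as written, that step is missing.
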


The graph $K_6$ is a somewhat extremal case for the Negami Conjecture. If a planar graph $G$ covers $K_6$, it has to be $5$-regular. The structure of
$5$-regular planar graphs is very limited, but we show that the problem is still \cNP-complete.

\begin{theorem} \label{K4_K5_are_NPc_theorem}
The problems $\planarcover(K_4)$ and $\planarcover(K_5)$ are \cNP-complete.
\end{theorem}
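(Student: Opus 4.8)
The statement clearly puts the problem in \cNP: a claimed map $f\colon V(G)\to V(K_\ell)$ for $\ell\in\{4,5\}$ can be checked for local bijectivity at every vertex in polynomial time. Also $K_4$ is planar and $K_5$ embeds in the projective plane, so both admit planar covers and the problem is not the trivial one; hence the whole task is \cNP-hardness. My plan is to reduce from \mpsat, the same planar satisfiability problem underlying the proof of Theorem~\ref{K6_is_NPc_theorem}: given a monotone $4$-CNF formula $\varphi$ with planar incidence graph, construct in polynomial time a planar graph $G_\varphi$ (with target $K_4$, resp.\ $K_5$) such that $G_\varphi$ covers $K_\ell$ if and only if $\varphi$ admits an assignment making exactly two literals true in each clause.

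The first step is a local description of the coverings, since this is what the gadgets encode. A cover of $K_4$ is a cubic graph, and a covering map is a proper vertex $4$-colouring in which every closed neighbourhood sees all four colours; this is equivalent to saying that the induced labelling of edges by the three perfect matchings of $K_4$ is a proper $3$-edge-colouring whose values, read in the Klein four-group, must sum to the identity along every cycle. A cover of $K_5$ is analogously a $4$-regular graph carrying a proper $5$-colouring with an analogous ``consistency around cycles'' condition. These descriptions expose a small but non-trivial amount of freedom at each vertex and edge, and the reduction is built around it.

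The second step is the gadget construction. I would design: (i) a \emph{variable gadget} whose $K_\ell$-coverings split into two families, representing true and false; (ii) \emph{copy and branching gadgets} that force a variable's value to propagate unchanged, possibly to many occurrences, while staying planar; and (iii) a \emph{clause gadget} on four input ports whose $K_\ell$-coverings are exactly those with precisely two ``true'' inputs. Each gadget is a small planar graph with all interface vertices on the outer face, so the gadgets can be glued along a fixed planar embedding of the incidence graph of $\varphi$; planarity of that incidence graph then yields planarity of $G_\varphi$. Correctness is checked in both directions: any covering of $G_\varphi$ restricts on each variable gadget to one of its two states, these states are propagated consistently by the copy gadgets and satisfy the $2$-in-$4$ constraint at every clause gadget, so one reads off a satisfying assignment; conversely a satisfying assignment extends, gadget by gadget, to a covering of all of $G_\varphi$. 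To cover both $K_4$ and $K_5$ I expect either to carry out this construction twice with tailored gadgets, or to treat one case and derive the other by a local degree-changing substitution --- e.g.\ reducing $\planarcover(K_6)$ to $\planarcover(K_5)$ and then to $\planarcover(K_4)$ by replacing each vertex with a fixed planar gadget that absorbs one unit of degree while faithfully reproducing the space of coverings.

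The main obstacle is the gadget design itself. One must exhibit planar gadgets that (a) have \emph{exactly} the intended coverings with no spurious ones --- the consistency conditions above make this delicate, since a single misbehaving port can propagate globally --- and (b) can be assembled with unbounded fan-out without destroying planarity. The branching/copy gadget for a variable of high occurrence and the clause gadget realising the rigid ``exactly two of four'' relation are the hard parts, and verifying both the planarity of the final assembly and the absence of unintended coverings is where the bulk of the work lies; the reductions between the three complete graphs, if used, are comparatively routine.
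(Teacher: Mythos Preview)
Your proposal has a factual slip and, more importantly, stops short of the actual content of the proof.

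First, the slip: Theorem~\ref{K6_is_NPc_theorem} is \emph{not} proved by reducing from \mpsat. The $K_6$ reduction is from $6$-\segcol, using an arrangement of segments and a \emph{crossing gadget} that transfers colour along each segment while forcing crossing segments to have distinct colours. The \mpsat\ reduction you describe is the one used for the dumbbell graph $D$ in Theorem~\ref{cinka_is_NPc}. So the phrase ``the same planar satisfiability problem underlying the proof of Theorem~\ref{K6_is_NPc_theorem}'' is simply wrong, and with it goes your ability to piggyback on that section.

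Second, and more substantively: the paper's proof of Theorem~\ref{K4_K5_are_NPc_theorem} reduces from $4$-\segcol\ (resp.\ $5$-\segcol) by \emph{reusing the entire crossing-gadget architecture} of Section~\ref{sec:hardness_K6} and only swapping in new auxiliary gadgets tailored to $K_4$ and $K_5$. The work is to exhibit those auxiliary gadgets and verify the analogue of Lemma~\ref{crossing_gadget_lemma}; the global reduction then goes through verbatim. Your outline instead proposes to build variable, copy, branching and $2$-in-$4$ clause gadgets from scratch. That route is not impossible in principle, but you have not supplied a single gadget, and as you yourself note, \emph{that} is where the entire difficulty lies. ``I would design a clause gadget whose $K_\ell$-coverings are exactly those with precisely two true inputs'' is not a proof step; it is a restatement of the problem. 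The alternative you float --- reducing $\planarcover(K_6)$ to $\planarcover(K_5)$ to $\planarcover(K_4)$ by a ``local degree-changing substitution'' --- is also only a hope: covering is a rigid local-bijectivity condition, and there is no reason to expect a vertex-blow-up to preserve the space of covers faithfully without a concrete construction and verification.

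In short: the paper's approach is different from yours (segment colouring plus modified auxiliary gadgets, not SAT plus variable/clause gadgets), and your proposal as it stands is a plan rather than a proof --- the gadgets that would constitute the argument are absent.
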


Covering of these regular graphs is related to coloring squares of graphs. For example, a cubic planar graph $G$ covers $K_4$ if and only if its
square $G^2$ is $4$-colorable. Coloring the squares of graphs (especially planar) is widely studied as a special case of the channel assignment
problem; see~\cite{ramanathan93}. Dvo\v{r}\'ak et al.~\cite[Theorem 25]{dvorak08} prove that deciding whether the square of a given subcubic planar
graph is 4-colorable is \cNP-complete. Theorem~\ref{K4_K5_are_NPc_theorem} strengthens this result.

\medskip

We denote $K_4$ with a leaf attached to a vertex by $K_4^+$ and $K_5$ without an edge by $K_5^-$; see Figure~\ref{graphs_K4plus_and_K5minus}. 

\begin{theorem} \label{K4plus_K5minus_are_NPc_theorem}
The problems $\planarcover(K_4^+)$ and $\planarcover(K_5^-)$ are \cNP-complete.
\end{theorem}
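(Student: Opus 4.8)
The plan is to reduce from $\planarcover(K_4)$ and $\planarcover(K_5)$, which are \cNP-complete by Theorem~\ref{K4_K5_are_NPc_theorem} (and more concretely from the same \mpsat{} instances used in its proof); membership of $\planarcover(K_4^+)$ and $\planarcover(K_5^-)$ in \cNP{} is clear, as for every $\planarcover(H)$. The first step is to record a structural description of the covers of the two target graphs. If $f\colon G\to K_4^+$ is a covering projection with $5$ the leaf attached to the vertex $1$ of the $K_4$-part, then every preimage of $5$ is a pendant vertex whose unique neighbour is a preimage of $1$, and each preimage of $1$ has exactly one such pendant neighbour; hence deleting all preimages of $5$ yields a cubic graph $G_0$ with a covering $G_0\to K_4$ whose fibre over $1$ is precisely the set of former neighbours of the removed leaves. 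Conversely, $G$ covers $K_4^+$ if and only if $G$ is obtained from a cubic graph covering $K_4$ by attaching one pendant vertex to every vertex of one of its four colour classes (equivalently, one of its four $1$-perfect codes). Dually, if $f\colon G\to K_5^-$ with $K_5^-=K_5-\{45\}$, then the preimages of $\{4,5\}$ form an independent set, and inserting an arbitrary perfect matching between the fibre over $4$ and the fibre over $5$ produces a covering onto $K_5$; conversely, deleting from any $K_5$-cover the perfect matching consisting of the edges between the fibres over two fixed vertices gives a cover of $K_5^-$. All these operations preserve planarity, since hanging pendant vertices and deleting edges cannot destroy a planar embedding.

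These characterisations do not by themselves give a reduction, because from a bare instance $G_0$ of $\planarcover(K_4)$ we do not know \emph{which} colour class should receive the pendants, and the colour-permutation symmetry of covers means that no vertex is ``always colour $1$''. To get around this I would not start from an unstructured planar graph, but re-enter the \mpsat{} reduction behind Theorem~\ref{K4_K5_are_NPc_theorem} and modify its gadgets locally. The key observation for $K_4^+$ is that attaching a pendant vertex to a vertex $x$ \emph{anchors} $x$: in every cover onto $K_4^+$ the vertex $x$ then has degree $4$ and so is forced onto the vertex $1$, and its neighbours are forced into $\{2,3,4\}$. Thus, replacing certain designated ``colour-$1$'' vertices of the variable and clause gadgets by pendant-carrying vertices simultaneously supplies the missing leaves and rigidifies the construction, so that a cover onto $K_4^+$ exists exactly when the underlying cubic graph admits a $K_4$-cover whose colour-$1$ class is the anchored set --- which, by the truth-transmitting behaviour of the gadgets established in the proof of Theorem~\ref{K4_K5_are_NPc_theorem}, happens exactly when the \mpsat{} instance is satisfiable. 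For $K_5^-$ one proceeds analogously: in the $\planarcover(K_5)$ gadgets one deletes the (forced) edges that play the role of the missing edge $45$, and argues, using the rigidity of the frame of the construction, that every cover onto $K_5^-$ of the modified graph extends by re-adding these edges to a cover onto $K_5$ of the original graph, and conversely.

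The main obstacle is exactly this rigidity bookkeeping: one has to verify that, across all local choices encoding a satisfying assignment, the anchored vertices (for $K_4^+$) form a single well-defined fibre, respectively the deleted edges (for $K_5^-$) form the same perfect matching between two well-defined fibres, so that the colour-rotation ambiguity of the base cover is eliminated and the added or removed structure really corresponds to a single yes/no answer. This amounts to re-examining the gadgets of Theorem~\ref{K4_K5_are_NPc_theorem} and checking that they remain locally rigid around the anchors (for instance, that the anchored set is a $1$-perfect code in every $K_4$-cover that respects the gadget constraints). Once this is done, correctness of both reductions follows from the characterisations of the first paragraph, planarity is immediate because the modifications are purely local and keep the gadgets planar, and the reductions run in polynomial time since each gadget grows only by a bounded amount. \qed
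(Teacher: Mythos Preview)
Your proposal rests on a factual misreading: the proof of Theorem~\ref{K4_K5_are_NPc_theorem} in this paper does \emph{not} go through \mpsat. That problem is used only for the dumbbell graph (Theorem~\ref{cinka_is_NPc}); $\planarcover(K_4)$ and $\planarcover(K_5)$ are obtained from $4$-\segcol{} and $5$-\segcol{} by swapping the auxiliary gadget in the $K_6$ reduction of Section~\ref{sec:hardness_K6}. So ``re-entering the \mpsat{} reduction behind Theorem~\ref{K4_K5_are_NPc_theorem}'' and speaking of its ``variable and clause gadgets'' is not possible as stated; those gadgets do not exist here.

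More importantly, what you submit is a plan with the central obstacle explicitly left open, and in the actual \segcol{} gadgets that obstacle is serious. In the $K_4$ reduction every label in $\{1,2,3,4\}$ occurs as a segment colour, and the non-colour vertices of a subsegment are mapped to an \emph{arbitrary} $y\neq x$ (cf.\ Figure~\ref{parallel_edges}a); there is no single fibre that is constant across all crossing gadgets and all covers, hence no canonical set of vertices to which pendants could be attached independently of the (unknown) segment colouring. Anchoring one vertex breaks the global colour symmetry at that spot, but it does not force a consistent ``colour~$1$'' class through crossing gadgets whose non-colour vertices remain free. The same difficulty arises for $K_5^-$: there is no designated pair of fibres between which one could delete edges. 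Your structural characterisations of covers of $K_4^+$ and $K_5^-$ in the first paragraph are correct, but they do not by themselves yield a reduction, as you yourself note.

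The paper sidesteps all of this by taking a different route: it reduces both problems directly from $3$-\segcol{} and designs \emph{new} auxiliary and crossing gadgets in which the segment colours live in $\{1,2,3\}$ while every non-colour vertex is forced to the distinguished vertex~$0$ (the vertex carrying the leaf in $K_4^+$, respectively one endpoint of the missing edge in $K_5^-$). The asymmetry of the target graph is thus built into the gadgets from the outset, and no after-the-fact rigidity bookkeeping is needed.
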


Theorems~\ref{K6_is_NPc_theorem}, \ref{K4_K5_are_NPc_theorem} and \ref{K4plus_K5minus_are_NPc_theorem} together give an affirmative answer
to Question~\ref{kratochvil_question} for all graphs with up to five vertices except for $W_4$; see details in Conclusions.

\medskip

We also examine the smallest non-trivial multigraph case. The dumbbell graph $D$ is a multigraph with two adjacent vertices with a loop on each vertex; see Figure~\ref{multigraph_example} on the right. This
graph is the smallest multigraph for which the problem $\cover$ is \cNP-complete.

\begin{theorem} \label{cinka_is_NPc}
The problem $\planarcover(D)$ is \cNP-complete.
\end{theorem}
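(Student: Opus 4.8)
\medskip
\noindent\textbf{Proof idea.}
The plan is to reduce from the \cNP-complete problem \mpsat\ (possibly routing through the auxiliary problem $\segcol$, as in our earlier reductions). The first task is to make the target condition explicit. Viewing each loop of $D$ as contributing two to the degree, $D$ is $3$-regular, and since it carries a loop at each of its two vertices together with the edge between them, \emph{every} map $V(G)\to V(D)$ is a homomorphism. Hence $G$ covers $D$ if and only if $G$ is cubic and $V(G)$ admits a $2$-coloring in which every vertex has exactly two neighbors of its own color and exactly one neighbor of the other color; equivalently, $G$ is cubic and has a perfect matching $M$ such that the cycles of the $2$-factor $G-M$ can be $2$-colored so that every edge of $M$ joins the two color classes. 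This $2$-coloring picture is what the reduction encodes: a ``wire'' will carry a single bit, namely the color of a designated strand.

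The construction then follows the standard template for planar covering reductions. I would first build a planar \emph{wire} gadget --- a chain of small cubic blocks --- and carry out a short case analysis of the ``two agree, one disagrees'' rule to show that its valid colorings collapse to exactly two states that propagate unchanged; \emph{bends} and a \emph{splitter} that feeds one variable into several clauses are then obtained by joining wire pieces at degree-$3$ vertices, and a negation, should it be wanted, by crossing the two strands. Next I would design a \emph{variable} gadget: a small planar cubic patch whose valid colorings are exactly two, one per truth value, emitting the appropriate wires. Finally comes a \emph{clause} gadget with four wire terminals that admits a valid coloring if and only if exactly two of its four incoming signals are ``true'', enforcing the ``$2$-in-$4$'' constraint of \mpsat\ (monotonicity of the instances means no negation gadgets are needed). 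Given a planar \mpsat\ instance one lays out its planar incidence drawing and replaces its variables, clauses and incidences by these gadgets and wires; planarity of the drawing means no crossover gadget is required, every vertex ends up with degree exactly $3$, and the resulting planar cubic graph covers $D$ precisely when the formula has a $2$-in-$4$ satisfying assignment. Combined with $\planarcover(D)\in\cNP$ and the \cNP-completeness of \mpsat, this yields the theorem.

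The real work, and the main obstacle, is the gadget engineering, which is unusually delicate because $D$ is so small: each vertex of $G$ stores just one bit, and is governed by the rigid local rule that exactly one of its three incident edges lies in the matching, with the global side condition that the quotient obtained by contracting the cycles of $G-M$ stays bipartite. In particular the naive wire --- a plain ladder --- does not suffice, since besides the two intended states with monochromatic strands it also admits period-$4$ colorings, so the genuine wire gadget must be shaped to kill these. The same phenomenon, amplified, is the crux for the clause gadget: it must realize the ``exactly two of four'' predicate, be internally cubic, expose only degree-$3$ terminals, embed in the plane with the right cyclic order of those terminals, and --- hardest of all --- admit \emph{no} spurious valid coloring beyond those encoding a genuine satisfying partial assignment. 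Establishing this will require a complete enumeration of the gadget's colorings and, very likely, several design iterations before a gadget free of parasitic states is obtained.
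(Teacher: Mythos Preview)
Your plan matches the paper's: reduce \mpsat\ (not \segcol) via the $2$-coloring description of covers of $D$, with variable gadgets encoding truth values and a clause gadget enforcing the $2$-in-$4$ constraint. As you yourself acknowledge, though, what you have written is only the outline; the gadget constructions that constitute the actual proof are deferred, so as it stands this is a plan rather than a proof.

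One point where the paper's execution is simpler than you anticipate: no wire, bend, or splitter machinery is needed at all. Since the clause--variable incidence graph of an \mpsat\ instance is already planar, the paper places one gadget per variable and one per clause and joins them by \emph{single edges} following the planar drawing. A variable with $k$ occurrences gets a gadget built on the cycle $C_{4k}$, with every fourth cycle vertex going out to a clause and each of the remaining cycle vertices attached to its own pendant triangle; triangles are forced monochromatic, which in turn forces the whole cycle monochromatic, so the gadget has exactly two colorings and already exposes $k$ output vertices---no separate splitter is required. The clause gadget is assembled from eight copies of an explicit ``basic block'' into an auxiliary piece that has precisely three colorings up to swapping the two colors, each with two outer vertices of each color; a small wrapper around this auxiliary piece then yields a clause gadget colorable iff exactly two of its four inputs are black. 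So the parasitic-coloring worry you raise is genuine, but once these specific small pieces are in hand the case analysis is short rather than an open-ended search.
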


This result strengthens a result of Janczewski et. al~\cite[Proposition 5]{polaci} which proves
hardness for partial $\planarcover(D)$.  By partial covers we mean locally \emph{injective}
homomorphisms. As described in Section~\ref{sec:hardness_dumbbell}, if $G$ covers $D$, it has to be
a cubic planar graph. For a partial cover of $D$, it has to be a subcubic planar graph. But if the input graph is
cubic, then every partial cover of $D$ is also a cover of $D$.\footnote{We note that even in general
partial $\planarcover(H)$ problem is at least as hard as $\planarcover(H)$.} On the other hand, reductions for
partial covers cannot be easily extended to covers while preserving planarity. 

%\heading{Structure.} In Section~\ref{sec:hardness_K6}, we prove Theorem~\ref{K6_is_NPc_theorem}. In Section~\ref{sec:hardness_others}, we modify the
%reduction from the previous section to obtain Theorems~\ref{K4_K5_are_NPc_theorem} and \ref{K4plus_K5minus_are_NPc_theorem}. In
%Section~\ref{sec:hardness_dumbbell}, we use a different idea to prove Theorem~\ref{cinka_is_NPc}. We conclude the paper with related questions
%and open problems.

\section{Hardness of Planar Covering of $K_6$} \label{sec:hardness_K6}

In this section, we prove Theorem~\ref{K6_is_NPc_theorem}: $\planarcover(K_6)$ is \cNP-complete.
First, we describe a problem we reduce from.

An \emph{intersection representation} of a graph is an assignment of sets to the vertices in such a
way that two vertices are adjacent if and only if the corresponding sets intersect. A graph is
called a \emph{segment graph} if it has an intersection representations where the sets are segments
in the plane. We consider only segment representations with all endpoints distinct and with no three
segments crossing in one point.  

\computationproblem
{$k$-\segcol}
{A segment representation of a graph $G$.}
{Yes if $G$ is $k$-colorable, no otherwise.}

Ehrlich et al.~\cite{tarjan76} proved that $k$-\segcol\ is \cNP-complete for $k \ge 3$. We note that there exist segment graphs which have every
representation exponentially large in the number of vertices;
see~\cite{kratmat_seg}. However this representation is a part of the input
hence it does not pose a problem; see~\cite{tarjan76}.

\heading{Overview of the Reduction.} We reduce $\planarcover(K_6)$ from $6$-\segcol. For a graph $G$ with a segment representation, we construct a
plane graph $G'$ which covers $K_6$ if and only if $G$ is $6$-colorable.

The reduction is sketched in Figure~\ref{reduction_sketch}. Consider an arrangement of segments. Every segment is split by crossings into several
\emph{subsegments} which contain no crossings. We construct a graph $G'$ with the same topology as the segment representation of $G$. Every subsegment
is represented by two parallel edges. 

\begin{figure}[t]
\centering
\includegraphics{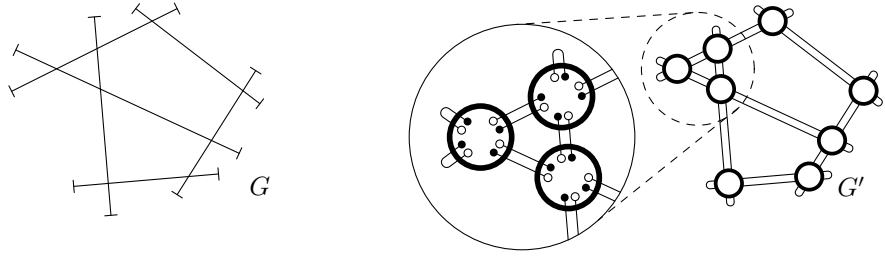}
\caption{We construct a planar graph $G'$ having the same topology as the arrangement of the segments.}
\label{reduction_sketch}
\end{figure}

We replace every crossing by a \emph{crossing gadget}. Every crossing gadget has four pairs of outer vertices. These vertices are incident with the
edges representing subsegments; see the detail in Figure~\ref{reduction_sketch}. In other words, two crossing gadgets are connected by a pair of parallel
edges if the crossings they represent lie on the same segment and there is no other crossing between them. A last subsegment of a segment is
represented by one edge connecting both outer vertices of a crossing gadget. The obtained planar graph has the same topology as the arrangement of the
segments.

\begin{figure}[b!]
\centering
\includegraphics{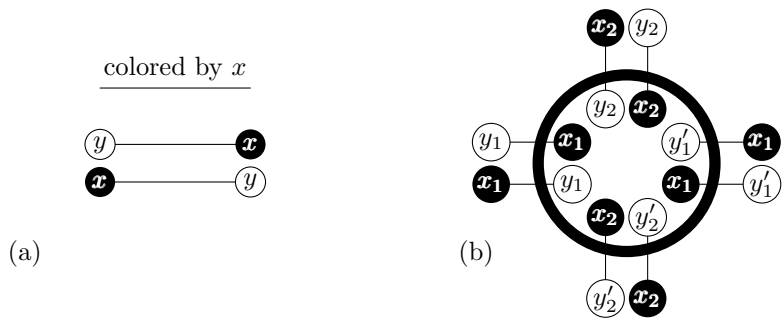}
\caption{(a) A subsegment colored by $x$ is represented by two parallel edges, with two \emph{color vertices} mapped to $x$ (depicted in black) and the other two
vertices mapped to an arbitrary $y$ (depicted in white) where $y \ne x$. (b) A crossing gadget transfers color information between the opposite
subsegments.}
\label{parallel_edges}
\end{figure}

\heading{Relation to Coloring.} Every subsegment is represented by a pair of parallel edges. Mapping of vertices of these edges to $K_6$ gives a
coloring of this subsegment in a way depicted in Figure~\ref{parallel_edges}a. On the other hand, crossing gadgets ensure that every covering of $K_6$
satisfy these properties. The vertices depicted in black are called \emph{color vertices} and the vertices depicted in white are called
\emph{non-color vertices}.

\heading{Crossing Gadget.} Every crossing gadget has four adjacent subsegments. Their color and non-color vertices alternate; see
Figure~\ref{parallel_edges}b. Using the topology of the segment arrangement, the crossing gadgets can be connected in this way so that $G'$ is planar.

This gadget ensures three properties:
\begin{packed_enum}
\item \emph{The subsegments are mapped in the way described in Figure~\ref{parallel_edges}a.}
\item \emph{The subsegments belonging to the same segment are colored by the same color.} Therefore, for every segment, all its subsegments are colored by
the same color and the color of this segment is well-defined. The crossing gadget gives no additional restrictions on non-color vertices.
\item \emph{Every two intersecting segments are colored by different colors.} It is possible to map the crossing gadget only if $x_1 \ne x_2$.
\end{packed_enum}

The crossing gadget is built from several basic blocks, called \emph{auxiliary gadgets}. The auxiliary gadget is a graph shown in
Figure~\ref{auxiliary_gadget}.

\begin{lemma} \label{auxiliary_gadget_lemma}
The auxiliary gadget can be mapped to $K_6$ in a unique way up to a permutation of the vertices of $K_6$.
\end{lemma}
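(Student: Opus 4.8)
The plan is to argue by a sequence of forced choices. First I would fix an arbitrary vertex $v$ of the auxiliary gadget whose closed neighborhood $N[v]$ has full size $6$ (such a vertex exists by construction — indeed every covering of $K_6$ requires each vertex to have degree $5$, so in a graph that covers $K_6$ all closed neighborhoods have size $6$). Any locally bijective homomorphism $f$ must map $N[v]$ bijectively onto $V(K_6)$; by composing with a permutation of $V(K_6)$ we may assume this bijection is a specific fixed one, say sending $v$ and its five neighbors to $1,\dots,6$ in a prescribed order. This normalization is exactly the ``up to a permutation of the vertices of $K_6$'' in the statement, and after it is made I claim the rest of $f$ is uniquely determined.

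The heart of the argument is then a propagation step. Working outward from $N[v]$, I would process the remaining vertices of the gadget one at a time in an order such that each new vertex $w$ already has at least two neighbors whose $f$-values are known. Because $K_6$ is complete, the images of any two already-placed neighbors of $w$ are two distinct vertices $a,b$ of $K_6$; the closed neighborhood $N[w]$ must map bijectively onto $V(K_6)$, and since the already-known neighbors together with $w$ occupy some of these slots, the constraint is that $f(w)$ together with $f$ of $w$'s neighbors forms all of $V(K_6)$. I would check, using the specific edge structure of the gadget in Figure~\ref{auxiliary_gadget}, that at each step this leaves exactly one admissible value for $f(w)$ — the key being that the local degree-$5$ condition at the already-placed vertices, combined with which non-edges are present, pins down precisely which vertex of $K_6$ each new vertex may receive. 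One then also verifies the consistency conditions (that edges among already-placed vertices are respected) are automatically satisfied, so the propagation never dies and produces a genuine homomorphism.

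Finally I would confirm that the map so constructed is in fact locally bijective (not merely a homomorphism), i.e.\ that every closed neighborhood in the gadget has size exactly $6$ and maps onto $V(K_6)$; this is a finite check over the vertices of the fixed gadget. Combining existence (the propagation succeeds) with uniqueness (each step was forced) gives the lemma. The main obstacle I anticipate is purely bookkeeping: choosing the processing order of the vertices and carefully tracking, at each of the finitely many steps, that exactly one value survives — this is where the particular choice of which edges are absent from the auxiliary gadget is used essentially, and it is the part that cannot be shortcut by a general principle but must be read off from the figure. A clean way to present it is a small table listing, for each vertex in processing order, its already-placed neighbors, their images, and the forced image of the vertex.
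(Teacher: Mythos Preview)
Your proposal is correct and follows essentially the same approach as the paper: fix the image of one vertex together with its five neighbors (absorbing the permutation freedom), then propagate outward using local bijectivity to see that every remaining value is forced. The paper compresses this into a single sentence (``if we fix a mapping for any vertex and its neighbors, the rest of the mapping is uniquely determined''), whereas you spell out the propagation mechanism and the final consistency check, but the underlying argument is identical.
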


\begin{proof}
Observe that if we fix a mapping for any vertex and its neighbors, the rest of the mapping is uniquely determined.\qed
\end{proof}

In every covering $f$, the six outer vertices $u_1, \dots, u_6$ of the auxiliary gadget are mapped to three distinct vertices of $K_6$ with $f(u_i) =
f(u_{i+3})$, $i \in \{1,2,3\}$. The parallel edges adjacent to the auxiliary gadget are mapped in the way described
in Figure~\ref{parallel_edges}a.

The crossing gadget consists of eight auxiliary gadgets; see Figure~\ref{crossing_gadget}. We need to prove that the crossing gadget is correct.

\begin{lemma} \label{crossing_gadget_lemma}
The crossing gadget can be mapped to $K_6$ if and only if the properties (1) to (3) are satisfied.
\end{lemma}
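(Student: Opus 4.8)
The plan is to analyze the eight auxiliary gadgets making up the crossing gadget one at a time, using Lemma \ref{auxiliary_gadget_lemma} to pin down each one's mapping to $K_6$ up to a permutation, and then to track how the color information propagates through the shared outer vertices. First I would observe the easy direction: if properties (1)--(3) hold, then one can write down an explicit mapping of the crossing gadget to $K_6$ block by block, so the gadget is coverable; the pictures in Figures \ref{parallel_edges} and \ref{crossing_gadget} essentially exhibit this mapping, and one just checks local bijectivity at the outer vertices where two auxiliary gadgets or an auxiliary gadget and a pair of parallel edges meet.

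For the forward direction, suppose $f$ is a cover of $K_6$ restricted to the crossing gadget. By Lemma \ref{auxiliary_gadget_lemma}, inside each of the eight auxiliary gadgets $f$ is rigid: fixing the images of one vertex and its neighborhood determines everything, and in particular the six outer vertices $u_1,\dots,u_6$ of each auxiliary gadget get exactly three distinct $K_6$-vertices with $f(u_i)=f(u_{i+3})$. I would first use this to establish property (1): the two endpoints of each pair of parallel edges leaving the gadget must receive a "color" value and a "non-color" value exactly in the alternating pattern of Figure \ref{parallel_edges}a, since the auxiliary gadget forces the local neighborhood structure and the covering condition forbids the two parallel edges from carrying the same pair of images in the same order (that would violate the bijection $N_G[v]\to N_H[f(v)]$ at an interior vertex). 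Then, walking from one outer pair of the crossing gadget to the adjacent auxiliary gadget and across to the opposite outer pair, the rigidity of each auxiliary gadget transports the color value unchanged, which gives property (2) — and iterating this over the whole graph $G'$ makes the color of each segment well-defined. Property (3) comes from the single auxiliary gadget that connects the two crossing strands: its outer vertices include color vertices from both segments, and since those two color vertices are adjacent (or at bounded distance) inside that gadget while an auxiliary gadget's outer vertices carry three \emph{distinct} images, we get $x_1\neq x_2$.

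The main obstacle I expect is bookkeeping rather than conceptual: one must set up a careful labeling of the eight auxiliary gadgets and their shared outer vertices so that "the color on this strand equals the color on the opposite strand" is a clean consequence of eight applications of Lemma \ref{auxiliary_gadget_lemma} glued along identified vertices, and so that the non-color vertices are genuinely left free (the gadget must impose \emph{no} constraint on them beyond $y\neq x$, otherwise the reduction from $6$-\segcol\ would be colouring-restricted in the wrong way). Getting the adjacency pattern at the gluing vertices exactly right — so that a color vertex of one auxiliary gadget is identified with a color vertex of the next, preserving the alternating pattern of Figure \ref{parallel_edges}b — is the delicate step, and it is where the specific topology of the crossing gadget in Figure \ref{crossing_gadget} is used. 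Once the labeling is fixed, each of (1)--(3) is a short deduction, and planarity of the assembled $G'$ follows because the crossing gadget is drawn with its four outer pairs in the cyclic order matching the four strands of the segment crossing.
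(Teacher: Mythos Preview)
Your proposal is correct and follows essentially the same approach as the paper: use Lemma~\ref{auxiliary_gadget_lemma} on each auxiliary block to force properties (1)--(3) in the forward direction, and handle the converse by case analysis showing that any assignment of the outer (including non-color) vertices satisfying (1)--(3) extends to the whole gadget. The paper's proof is even terser than your outline---it simply points to Figure~\ref{crossing_gadget} for the color transfer and to the ``central auxiliary gadgets'' (plural, not a single one) for $x_1\neq x_2$, and dispatches the converse as ``a straightforward case analysis''---so the bookkeeping you flag as the main obstacle is exactly what the paper leaves implicit.
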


\begin{figure}[t!]
\centering
\includegraphics{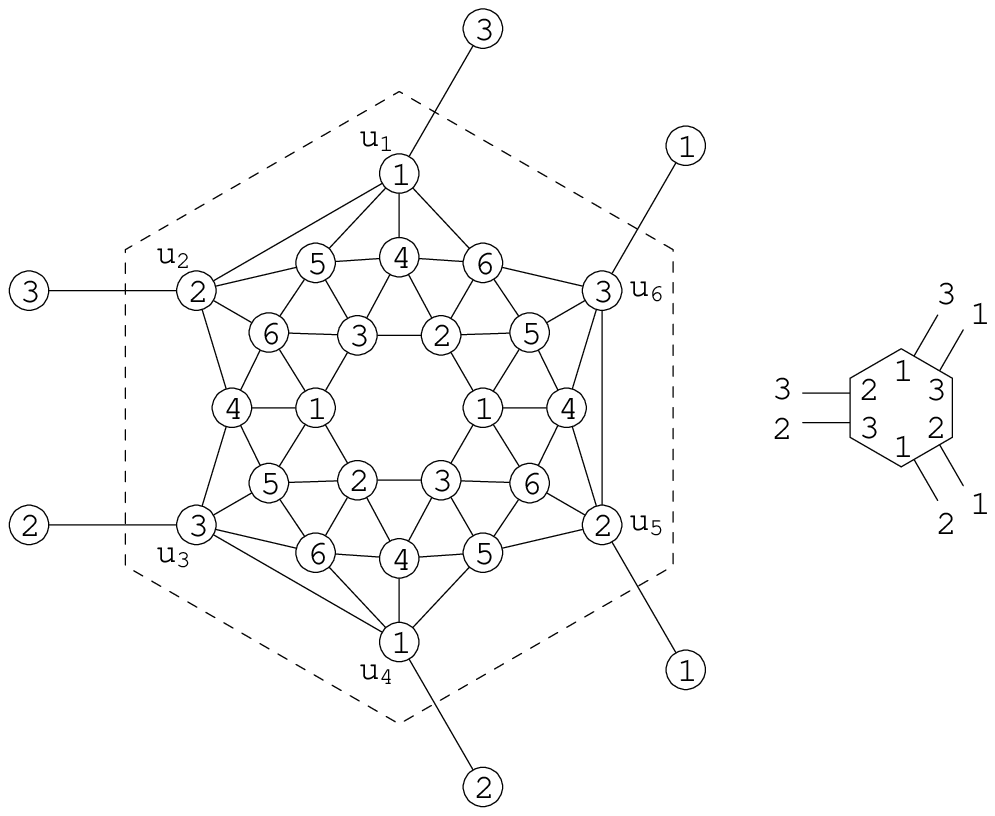}
\caption{The auxiliary gadget on the left, denoted by a hexagon on the right.}
\label{auxiliary_gadget}
\end{figure}

\begin{figure}[h!]
\centering
\includegraphics{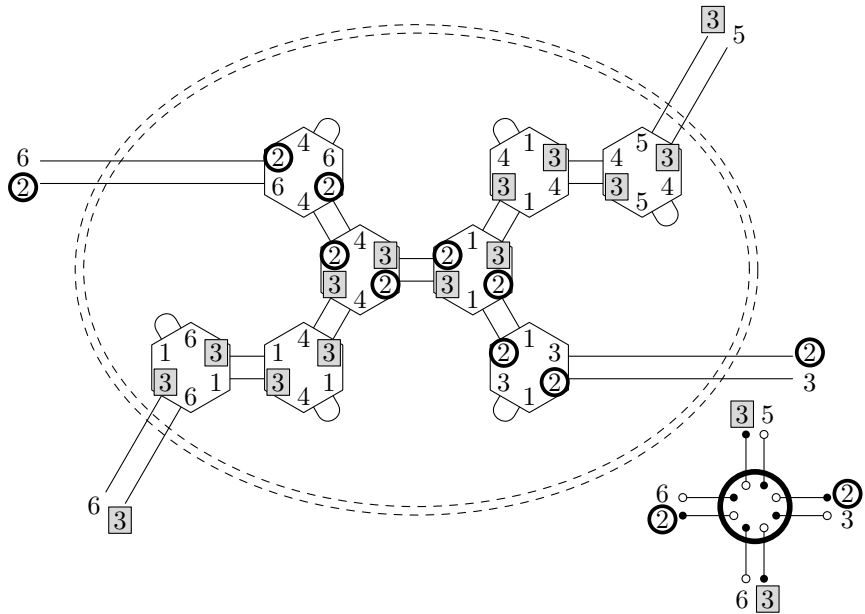}
\caption{The crossing gadget on the left, denoted by a circle on the right.}
\label{crossing_gadget}
\end{figure}

\begin{proof}
Let $G'$ cover $K_6$. Consider one crossing gadget. Since all edges representing subsegments are connected to auxiliary gadgets, according to
Lemma~\ref{auxiliary_gadget_lemma} these edges are mapped correctly as in Figure~\ref{parallel_edges}a. Colors are transfered between the opposite
subsegments, as depicted in Figure~\ref{crossing_gadget}. The central auxiliary gadgets force $x_1$ and $x_2$ to be distinct. Therefore, we know that
every mapping satisfies properties (1) to (3).

We need to show that if the vertices of the edges adjacent to the crossing gadget are mapped correctly, we can extend this mapping to the rest of the
gadget. By a straightforward case analysis we can see that an arbitrary correct mapping of non-color vertices can be extended.\qed
\end{proof}

We conclude this section with a proof of the main theorem:

\begin{proof}[Theorem~\ref{K6_is_NPc_theorem}]
Let $G'$ cover $K_6$. By Lemma~\ref{crossing_gadget_lemma}, the mapping of every crossing gadget satisfies the properties (1) to (3). Using the properties
(1) and (2), we can infer colors of the segments. By the property (3), this coloring is proper.

On the other hand, given a proper coloring, we map the color vertices according to this coloring. By Lemma~\ref{crossing_gadget_lemma}, it is possible to
extend the mapping to the entire graph $G'$.\qed
\end{proof}

% OTHER GRAPHS

\section{Hardness of Planar Covering of $K_4$, $K_5$, $K_4^+$ and $K_5^-$} \label{sec:hardness_others}

In this section, we sketch the proof of hardness of $\planarcover$ of $K_4$, $K_5$, $K_4^+$ and $K_5^-$.
The reductions slightly modify the reduction described in Section~\ref{sec:hardness_K6}.

In the case of $K_4$ and $K_5$, we just change the auxiliary gadget and reduce these problems from $4$-\segcol, resp. $5$-\segcol.

In the case of $K_4^+$ and $K_5^-$, we change both the auxiliary gadget and the crossing gadget and reduce these problems from $3$-\segcol. The color
vertices are mapped to $1$, $2$ and $3$, the non-color vertices are mapped to $0$ (or $-$ in the case of $K_5^-$); see
Figure~\ref{graphs_K4plus_and_K5minus}.

Due to space limitations, details of these reductions are described in Appendix.
%of pre-print version~{\tt ADD REF}.

\begin{figure}[t]
\centering
\includegraphics{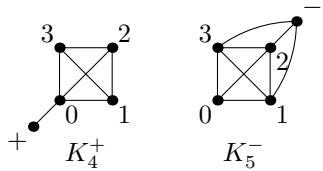}
\caption{Graphs $K_4^+$ and $K_5^-$ with labeled vertices.}
\label{graphs_K4plus_and_K5minus}
\end{figure}

% DUMBBELL GRAPH

\section{Hardness of Planar Covering of the Dumbbell Graph} \label{sec:hardness_dumbbell}

\begin{figure}[b]
\centering
\includegraphics{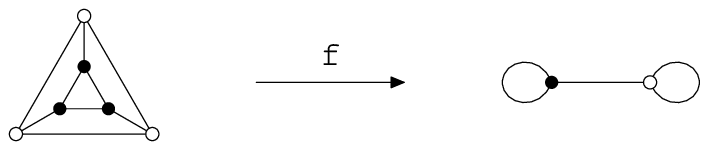}
\caption{An example of a cover of the dumbbell graph $W$.}
\label{multigraph_example}
\end{figure}

In this section, we prove hardness of $\planarcover(D)$ where $D$ is the dumbbell graph (see Figure~\ref{multigraph_example} on the right).  It is a
multigraph and the notion of covering can be extended to multigraphs, see~\cite{kratochvil94}. For the purpose of this paper, we need only the
following: $G$ is a planar cover of $D$ if $G$ is a cubic planar graph and can be colored by two colors (black and white) in such a way that every
black vertex has two black neighbors and one white neighbor and every white vertex has two white neighbors and one black neighbor; see
Figure~\ref{multigraph_example}.  In the rest of the section, we use this coloring interpretation.

%
%study multigraphs. The~notion of covering can be easily extended to multigraphs. Let $G$ be
%a~cover of a~multigraph~$H$. If $x \in G$ and $y \in H$ such that $f(x)y$ are adjacent by $k$ parallel edges, we have $k$ vertices of $N_G(x)$ mapped
%to $y$. For a vertex $x \in G$ such that $f(x)$ has $k$ loops in $H$, we have $2k$ vertices of $N_G(x)$ mapped to $f(x)$, i.e., loops count for two
%edges.

To prove the hardness of $\planarcover(D)$, we first describe the problem we reduce from. $\mpsat$ is a satisfiability problem where:
\begin{packed_item}
\item all clauses contain exactly four variables,
\item the incidence graph of clauses and variables is planar, and
\item all variables are in the positive form, i.e. there is no negation.
\end{packed_item}
A clause is satisfied if exactly two variables are true. The entire formula is satisfied if all clauses are satisfied. For an example, see
Figure~\ref{sat_example}a. K\'ara, Kratochv\'il and Wood~\cite{kara_kratochvil_wood07} proved that this problem is still \cNP-complete.

\begin{figure}[t]
\centering
\includegraphics{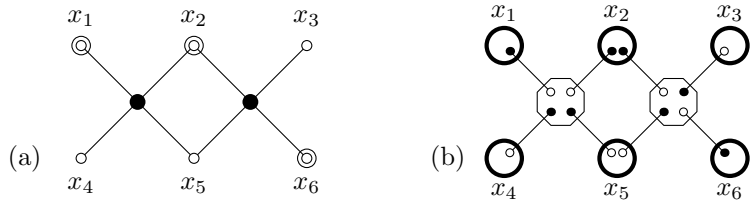}
\caption{(a) A graph $G$ representation of the formula: $(x_1,x_2,x_4,x_5) \wedge (x_2,x_3,x_5,x_6)$. This formula can be satisfied by an assignment
	$x_1 = x_2 = x_6 = 1$ and $x_3 = x_4 = x_5 = 0$. (b) The constructed graph $G'$ for this formula.} 
\label{sat_example}
\end{figure}

\heading{Overview of Reduction.} Let $G$ be a planar incidence graph of variables and clauses. We
construct a graph $G'$ such that $G'$ covers $D$ if and only if the formula is satisfiable. We
replace every variable with a \emph{variable gadget} and every clause with a \emph{clause gadget}.
If a variable is in a clause, we connect the variable gadget and the clause gadget by an edge. The
variable gadgets and the clause gadgets are connected in the way that the overall topology of $G$ is
preserved in $G'$; see Figure~\ref{sat_example}b.

The variable gadget can be colored in two ways which encodes the assignment of the variable. Every
clause gadget can be colored if and only if two of its variables gadget are true and the other two
are false.

\begin{figure}[b]
\centering
\includegraphics{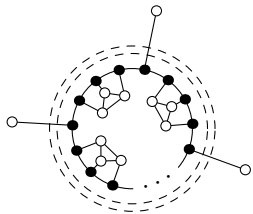}
\caption{The variable gadget with a unique coloring---up to swapping of the colors.}
\label{cinka_variable_gadget}
\end{figure}

\heading{Variable Gadget.} For a variable that appears in the formula $k$ times, the variable gadget contains the cycle $C_{4k}$.
Every fourth vertex of the cycle is connected to a clause gadget. The remaining vertices are connected to triangles; see
Figure~\ref{cinka_variable_gadget}.

\begin{figure}[t!]
\centering
\includegraphics{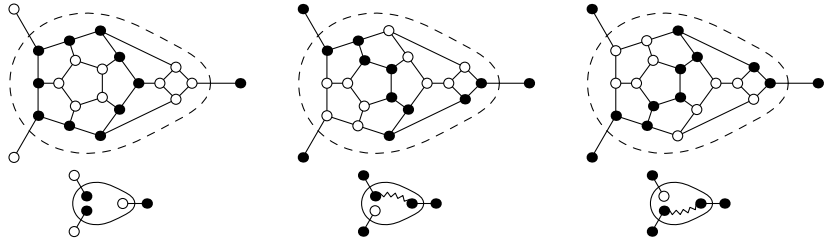}
\caption{The basic block has three different colorings---up to swapping of the colors.}
\label{cinka_block_gadget}
\end{figure}

\begin{figure}[t!]
\centering
\includegraphics{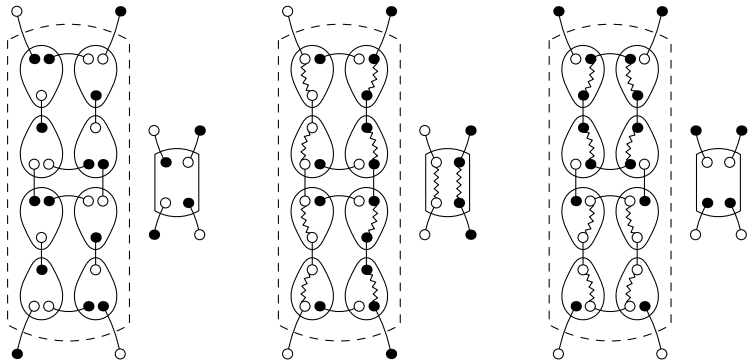}
\caption{The auxiliary gadget with all three colorings---up to swapping of the colors. Note that every coloring has two outer vertices black and the
other two white.}
\label{cinka_auxiliary_gadget}
\end{figure}

\begin{figure}[t!]
\centering
\includegraphics{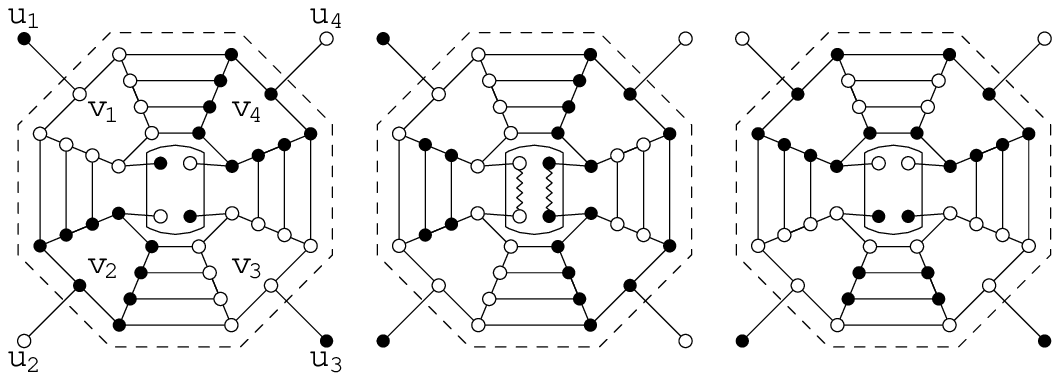}
\caption{The clause gadget with all three colorings---up to swapping of the colors.}
\label{cinka_clause_gadget}
\end{figure}
\begin{lemma} \label{cinka_variable_gadget_lemma}
For every coloring of the variable gadget, $u_1, \dots, u_k$ are colored by one color and $v_1, \dots, v_k$ by the other one.
\end{lemma}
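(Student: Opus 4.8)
The plan is to argue entirely in the colouring language of Section~\ref{sec:hardness_dumbbell}: a cover of $D$ corresponds to a $2$-colouring of the (cubic) gadget in which every vertex has exactly two neighbours of its own colour and one of the opposite colour, or equivalently, the monochromatic edges form a $2$-factor and the bichromatic edges form a perfect matching. First I would prove a local rigidity statement about triangles, then use it to propagate colours along the backbone cycle $C_{4k}$ of the variable gadget.

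\emph{Triangles are monochromatic.} If a triangle $abc$ were not monochromatic, say $a$ and $b$ black and $c$ white, then $c$ would have two black neighbours inside the triangle and therefore could not also have the two white neighbours that a white vertex must have. Hence $a,b,c$ share a colour; moreover each of them already has its two same-coloured neighbours inside the triangle, so its remaining (outgoing) edge is its unique bichromatic edge and connects it to a vertex of the opposite colour. In particular, every backbone vertex of the variable gadget that is attached to a triangle has its triangle-edge as its bichromatic edge, which forces both of its neighbours on $C_{4k}$ to take its own colour.

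\emph{Propagation around $C_{4k}$.} The backbone vertices lying strictly between two consecutive clause-attachment vertices are triangle-attached, so by the previous step the colour is constant along each such arc, and the two endpoints of the arc (the clause-attachment vertices) inherit that colour as well; walking around the whole cycle then shows that the colouring of $C_{4k}$ and of all attached triangles is determined once the colour of a single vertex is fixed, i.e.\ it is unique up to the global swap of the two colours. Reading off this forced colouring identifies all $u_i$ with one colour class and all $v_i$ with the other (the $v_i$ sitting on triangles, whose colour is opposite to that of the backbone by the first step), which is exactly the claim.

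The step I expect to be the real work is the propagation: one has to fix precisely how the triangles are attached to $C_{4k}$ and how the clause-attachment vertices behave (there one neighbour leaves the gadget, so only the two cycle-neighbours are under control), and then verify by a short but careful case analysis that no alternative choice of the monochromatic $2$-factor and its complementary matching inside the gadget can break the rigidity. Once the correct pattern of monochromatic and bichromatic edges along $C_{4k}$ is pinned down, the statement about $u_1,\dots,u_k$ and $v_1,\dots,v_k$ is immediate.
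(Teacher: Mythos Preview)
Your approach is exactly the paper's: show every triangle is monochromatic, then observe that this forces the backbone cycle $C_{4k}$ to be monochromatic, from which the claim follows. The paper's entire proof is two sentences (``Every triangle in the graph has to be monochromatic. The triangles force the cycle to be monochromatic as well.''), so your propagation argument is already more than sufficient and the extra case analysis you anticipate in the last paragraph is not needed; one small slip is that $v_1,\dots,v_k$ carry the \emph{cycle's} colour (the paper says ``If the inner cycle is colored black, the variable is assigned true''), so your guess that the $v_i$ sit on the triangles has the labels swapped, but this does not affect the argument.
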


\begin{proof}
Every triangle in the graph has to be monochromatic. The triangles force the cycle to be monochromatic as well.\qed
\end{proof}

The color of $v_1,\dots,v_k$ represents the value assigned to the variable. If the inner cycle is colored black, the variable is assigned true,
otherwise the variable is false.

\heading{Clause Gadget.} We start with \emph{basic blocks} described in Figure~\ref{cinka_block_gadget}. The \emph{auxiliary gadget} consists of eight
basic blocks; see Figure~\ref{cinka_auxiliary_gadget}. The reader is encouraged to prove that there exists no other colorings of these gadgets.

The clause gadget, described in Figure~\ref{cinka_clause_gadget}, contains an auxiliary gadget. Every clause gadget is connected by edges to the
corresponding variable gadgets.

\begin{lemma} \label{cinka_clause_gadget_lemma}
Let the vertices $u_i$ and $v_i$ have distinct colors for every $i \in \{1,2,3,4\}$. The crossing gadget can be covered if and only if exactly two of
$v_i$'s are colored black and the other two are colored white.
\end{lemma}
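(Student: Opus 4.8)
The plan is to analyze the clause gadget of Figure~\ref{cinka_clause_gadget} via its building blocks, working from the basic block up to the auxiliary gadget and then to the full clause gadget. First I would establish the colorings of the \emph{basic block} (Figure~\ref{cinka_block_gadget}): since $G'$ must be cubic and the coloring rule forces each vertex to have exactly one neighbor of the opposite color, a short case analysis on the block shows it admits precisely three colorings up to swapping colors, and in each of them the pattern of colors on the connecting vertices is as drawn. Next I would use this to pin down the \emph{auxiliary gadget} (Figure~\ref{cinka_auxiliary_gadget}): it is assembled from eight basic blocks, and propagating the constraints of the basic block through the gadget yields exactly three colorings, with the crucial invariant that in \emph{every} coloring exactly two of its four outer vertices are black and the other two are white. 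This invariant is the combinatorial heart of the argument, so I would isolate it as the key claim and verify it carefully, by checking that the three admissible colorings of the auxiliary gadget realize exactly the three ``two black, two white'' patterns on the outer vertices that are compatible with the block constraints.

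With the auxiliary gadget understood, I would then argue the equivalence stated in the lemma for the clause gadget. For the forward direction, suppose $G'$ (restricted to the clause gadget together with the edges to the variable gadgets) admits a valid $D$-coloring, and recall the hypothesis that $u_i$ and $v_i$ receive distinct colors for each $i\in\{1,2,3,4\}$. The clause gadget contains an auxiliary gadget whose four outer vertices are wired to the four attachment vertices; by the invariant just proved, exactly two of those outer vertices are black and two are white in any coloring, and tracing the edges (using that each attachment vertex has its remaining two neighbors inside the gadget forcing its color, and that $v_i$ is the opposite color of $u_i$) this translates into exactly two of the $v_i$ being black. For the converse, given any assignment in which exactly two $v_i$ are black and two white, I would exhibit an explicit coloring of the clause gadget extending it: the ``two black, two white'' pattern on the outer vertices of the auxiliary gadget is, up to the symmetry of the gadget, one of the three realizable patterns, so one of the three colorings of the auxiliary gadget fits, and it extends through the rest of the clause gadget by the block colorings. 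A small check that the three colorings of the clause gadget shown in Figure~\ref{cinka_clause_gadget} cover all $\binom{4}{2}=6$ two-black-two-white patterns on the $v_i$ (three patterns, each with its color-swap) completes this direction.

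The main obstacle I anticipate is the bookkeeping in the forward direction: one must rule out, via the cubic and local-coloring constraints alone, every coloring of the clause gadget in which the number of black $v_i$ is not exactly two, and this amounts to showing that no valid coloring of the internal blocks is consistent with a $0$–$1$–$3$–$4$ split on the outer vertices. The clean way to handle this is to push all of the case analysis into the auxiliary-gadget invariant (``every coloring has exactly two outer vertices black''), after which the clause-gadget statement follows by a short propagation argument rather than a large ad hoc case split; so the real work is the careful verification of that invariant, ideally presented as an exhaustive but compact enumeration of the colorings of the basic block and how they chain together in the auxiliary gadget. I would also make explicit the (routine) observation that in a cubic graph every locally injective homomorphism to $D$ is automatically locally bijective, so that the coloring interpretation from Section~\ref{sec:hardness_dumbbell} applies verbatim to each gadget. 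Once Lemma~\ref{cinka_clause_gadget_lemma} is in hand, combined with Lemma~\ref{cinka_variable_gadget_lemma} it yields that $G'$ covers $D$ if and only if the $\mpsat$ formula is satisfiable, which together with planarity of the construction proves Theorem~\ref{cinka_is_NPc}.
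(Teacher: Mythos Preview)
Your plan is correct and matches the paper's approach essentially point for point: the paper's two-line proof simply says the coloring is forced by the $u_i,v_i$ and then defers to the auxiliary-gadget invariant (stated in the caption of Figure~\ref{cinka_auxiliary_gadget}) for the $2$--$2$ split, which is exactly your strategy of pushing the case analysis into the basic-block and auxiliary-gadget enumerations and then finishing by propagation. Your write-up is considerably more explicit than the paper's, which leans almost entirely on the figures; in particular your observation that the three clause-gadget colorings together with their color-swaps must realize all $\binom{4}{2}=6$ patterns on the $v_i$ is the content of the converse direction that the paper leaves implicit in Figure~\ref{cinka_clause_gadget}.
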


\begin{proof}
Observe that the coloring is forced by colors of $u_i$ and $v_i$. The rest is ensured by the auxiliary gadget; see description in
Figure~\ref{cinka_auxiliary_gadget}.\qed
\end{proof}

\begin{proof}[Theorem~\ref{cinka_is_NPc}]
We need to show that there exists a correct assignment of the variables if and only if $G'$ covers $D$. Let $G'$ cover $D$. According to
Lemma~\ref{cinka_variable_gadget_lemma}, every variable gadget has to be colored in one of two ways, one representing the true assignment and the other one
the false assignment; see Figure~\ref{cinka_variable_gadget}. By Lemma~\ref{cinka_clause_gadget_lemma}, the clause gadget can be colored if and only
if exactly two variables in the clause are true and the other two are false. Therefore, the colorings gives an assignment of the variables which
satisfies the formula.

On the other hand, if there exists a correct assignment, we cover the variable gadgets according to it. Since every clause has two variables assigned
true and the other two assigned false, the corresponding clause gadgets can be covered according to Lemma~\ref{cinka_clause_gadget_lemma}.
We obtain a correct colorings of $G'$.

The reduction is clearly polynomial, which concludes the proof.\qed
\end{proof}

\section{Conclusions}

\begin{figure}[b]
\centering
\includegraphics{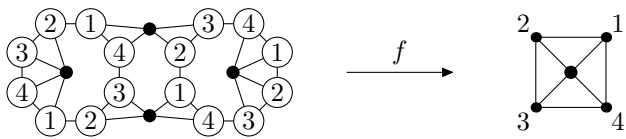}
\caption{An example planar cover of $W_4$. If $G$ covers $W_4$, it has to consist of cycles of
length divisible by four connected by black vertices of degree four. The cycles can be labeled
in the cyclic order $1$, $2$, $3$ and $4$ (eight possible labellings for each cycle) in such
a way that every vertex of degree four is adjacent to one vertex of each label.}
\label{example_of_wheel_cover}
\end{figure}

In this paper, we prove hardness of $\planarcover(H)$ for several small graphs $H$. Our techniques can be generalized to prove hardness of other
graphs. For example, if we replace the leaf in $K_4^+$ with any planar graph, the resulting $\planarcover$ problem is still \cNP-complete.

Our results give a positive answer to Question~\ref{kratochvil_question} for all graphs with at most
five vertices except for $W_4$, the wheel graph with four outer vertices ($\cover(W_4)$ is
\cNP-complete; see~\cite{kratochvil98}). For an example, see Figure~\ref{example_of_wheel_cover}.
Since the symmetries of $W_4$ are different from the symmetries of other graphs solved in this
paper, a reduction would require a new technique. We note that we were able to prove hardness of
partial $\planarcover(W_4)$.

\section*{Acknowledgment}
We would like to thank Jan Kratochv\'\i l for introducing us to the problem
and kindly answering our questions. We would also like to thank Aaron D.
Jaggard, Pavel Pat\' ak and Zuzana Safernov\'a for fruitful discussions.

\bibliographystyle{alpha}
\bibliography{planar_covering}

\newpage
\section*{Appendix}

In this appendix, we show hardness of $\planarcover$ of $K_4$, $K_5$, $K_4^+$ ($K_4$ with a leaf) and $K_5^-$ ($K_5$ without an edge).

\begin{figure}[b!]
\centering
\includegraphics{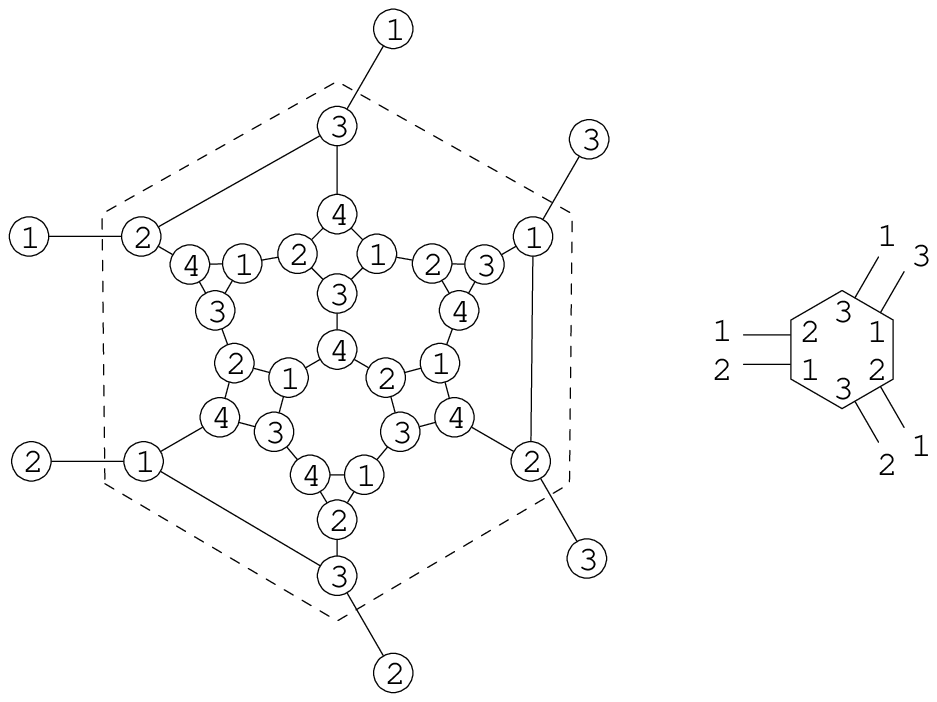}
\caption{The auxiliary gadget for $\planarcover(K_4)$.}
\label{K4_auxiliary_gadget}
\end{figure}

\begin{figure}[h!]
\centering
\includegraphics{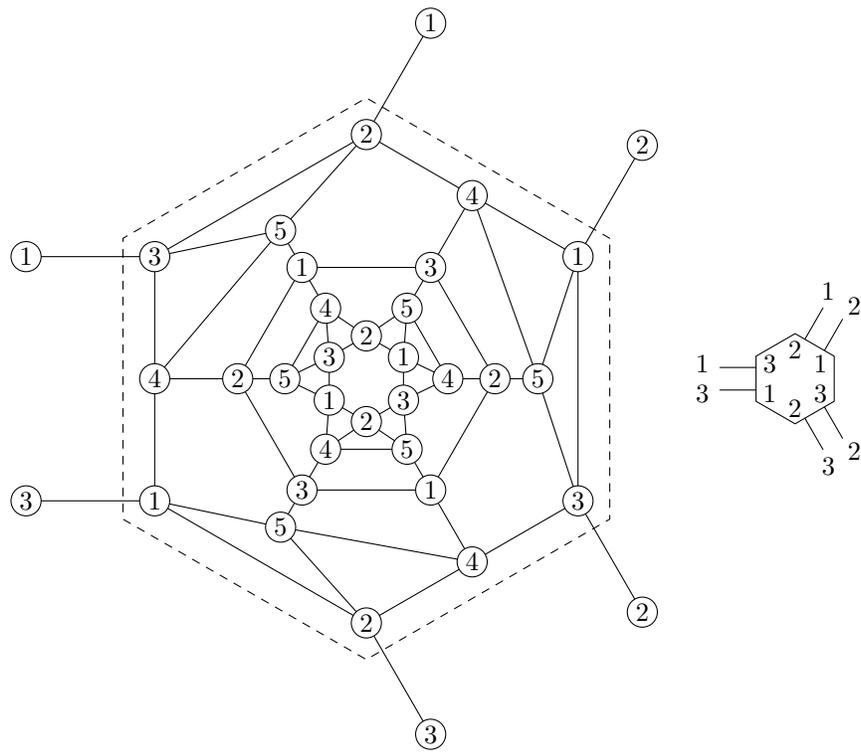}
\caption{The auxiliary gadget for $\planarcover(K_5)$.}
\label{K5_auxiliary_gadget}
\end{figure}

\heading{Covering of $K_4$ and $K_5$.} We modify the reduction described in Section~\ref{sec:hardness_K6}. Since these graphs have less vertices, we
reduce these problems from $4$-\segcol\ (resp.~$5$-\segcol). We use the auxiliary gadget from Figure~\ref{K4_auxiliary_gadget} (for $K_4$) and
Figure~\ref{K5_auxiliary_gadget} (for $K_5$).

To complete the reduction, we need to prove the following two properties. The constructed gadgets have the same properties as the auxiliary gadget for
$K_6$. They can be mapped to $K_4$ (resp.~$K_5$) in a unique way up to a permutation of the colors. Also, variations of
Lemma~\ref{crossing_gadget_lemma} holds. Both can be proved by a straightforward case analysis.

\heading{Covering of $K_4^+$ and $K_5^-$.} We reduce these problems from $3$-\segcol. We make more significant changes in the reduction. All color
vertices are mapped to $1$, $2$ or $3$. For $K_4^+$, all non-color vertices are mapped to $0$. For $K_5^-$, either all non-color are mapped to
$0$, or all of them are mapped $-$. We also modify the auxiliary gadget and the crossing gadget.

The auxiliary gadgets are described in Figure~\ref{K4plus_K5minus_auxiliary_gadget}. Using auxiliary gadgets, we construct the crossing gadget
described in Figure~\ref{K4plus_K5minus_crossing_gadget}. Lemma~\ref{crossing_gadget_lemma} holds for this crossing gadget and it can be proved by a
straightforward case analysis using properties of the auxiliary gadget. This concludes the reduction and proves the hardness.

\begin{figure}
\centering
\includegraphics{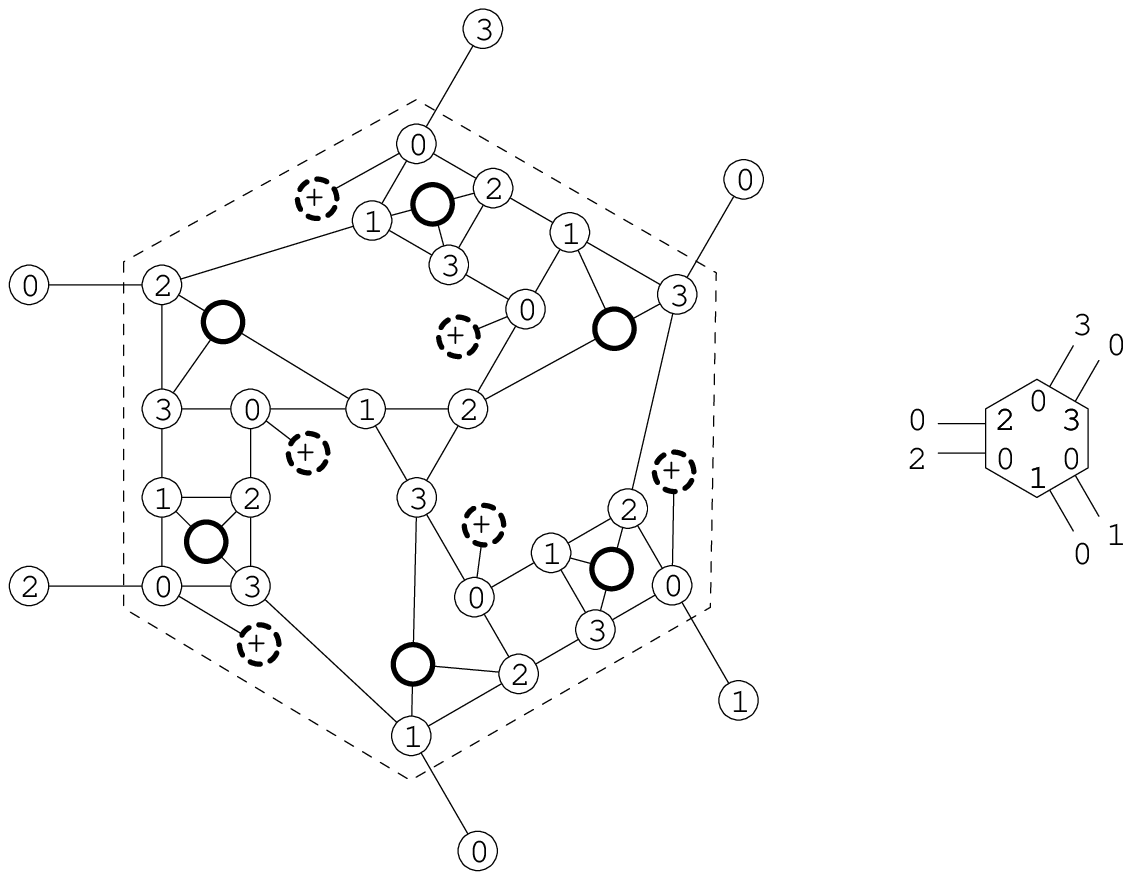}
\caption{The auxiliary gadgets for $\planarcover(K_4^+)$ and $\planarcover(K_5^-)$ in one figure. For $K_4^+$, the gadget does not contain vertices
mapped to $-$. For $K_5^-$, the gadget does not contain vertices mapped to $+$. Both gadgets admit only one mapping up to a permutation of $1$,
$2$ and $3$ (and in the case of $K_5^-$ up to swapping of $0$ and $-$).}
\label{K4plus_K5minus_auxiliary_gadget}
\end{figure}

\begin{figure}
\centering
\includegraphics{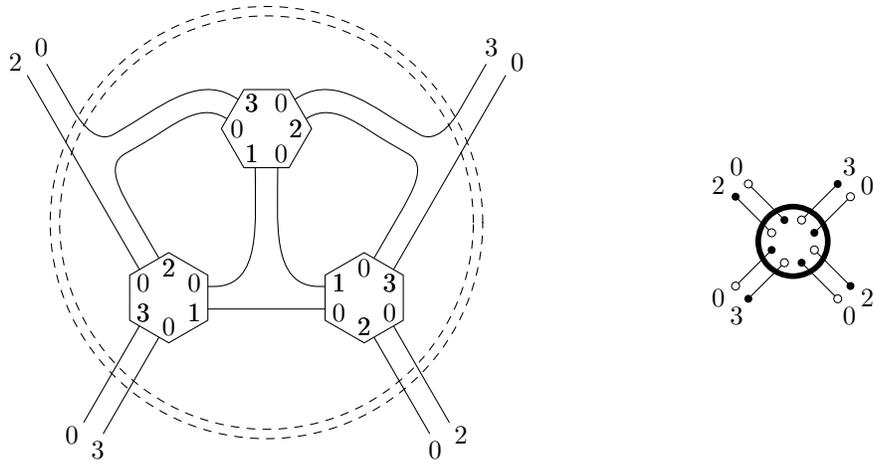}
\caption{The crossing gadget for \planarcover\ of $K_4^+$ and $K_5^-$.}
\label{K4plus_K5minus_crossing_gadget}
\end{figure}

\end{document}